\newtheorem{theorem}{Theorem}
\newtheorem{corollary}[theorem]{Corollary}
\newtheorem{definition}[theorem]{Definition}
\newtheorem{lemma}[theorem]{Lemma}
\newtheorem{proposition}[theorem]{Proposition}
\newtheorem{remark}[theorem]{Remark}
\newenvironment{proof}[1][Proof]{\noindent\textbf{#1.} }{\ \rule{0.5em}{0.5em}}
\newcommand{\RR}{\mathbb{R}}
\renewcommand{\a}{\alpha}
\newcommand{\e}{\varepsilon}
\newcommand{\D}{\Delta}
\begin{document}

\title{Jensen-type inequalities for convex and $m$-convex functions via fractional calculus}

\author{\textbf{Quintana, Yamilet}\\Universidad Carlos III de Madrid, Departamento de Matem\'aticas\\Avenida de la Universidad 30, 28911 Legan\'{e}s, Madrid, Spain\\\texttt{yaquinta@math.uc3m.es}\\
\and
\textbf{Rodr\'{\i}guez, Jos\'{e} M.}\\Universidad Carlos III de Madrid, Departamento de Matem\'aticas\\Avenida de la Universidad 30, 28911 Legan\'{e}s, Madrid, Spain\\\texttt{jomaro@math.uc3m.es}\\
\and \textbf{Sigarreta Almira, Jos\'{e} M.}\\Universidad Aut\'{o}noma de Guerrero, Centro Acapulco\\CP 39610, Acapulco de Ju\'{a}rez, Guerrero, M\'exico\\\texttt{josemariasigarretaalmira@hotmail.com} }

\maketitle

\begin{abstract}
Inequalities play an important role in pure and applied mathematics.
In particular, Jensen's inequality, one of the most famous inequalities,
plays a main role in the study of the existence and uniqueness of initial and boundary value
problems for differential equations.
In this work we prove some new Jensen-type inequalities  for $m$-convex functions, and we apply them to generalized Riemann-Liouville-type
integral operators.
It is remarkable that, if we consider $m=1$, we obtain new inequalities for convex functions.
\end{abstract}

\bigskip

\textit{AMS Subject Classification (2010): } 26A33, 26A51, 26D15

\textit{Key words and phrases:}
Jensen-type inequalities, convex functions, $m$-convex functions, fractional derivatives and integrals, fractional integral inequalities.

\bigskip

\section{Introduction}

Integral inequalities are used in countless
mathematical problems such as approximation theory and spectral analysis, statistical analysis and the
theory of distributions. Studies involving integral inequalities play an important role in several areas
of science and engineering.

In recent years there has been a growing interest in the study of many classical inequalities applied to integral operators associated with different types of fractional derivatives,
since integral inequalities and their applications
play a vital role in the theory of differential equations and applied mathematics.
Some of the inequalities studied are Gronwall, Chebyshev, Hermite-Hadamard-type, Ostrowski-type, Gr\"uss-type, Hardy-type,
Gagliardo-Nirenberg-type, reverse Minkowski and reverse H\"older
inequalities (see, e.g., \cite{Dahmani,Han,Mubeen,Nisar,Rahman,Rahman2,Rashid,Sawano,Set}).

In this work we obtain new Jensen-type inequalities for convex and $m$-convex functions, and we apply them to the generalized Riemann-Liouville-type
integral operators defined in \cite{BCRS}, which include most of known Riemann-Liouville-type integral operators.

\section{Preliminaries}

One of the first operators that can be called fractional is the Riemann-Liouville fractional derivative of order $\alpha \in \mathbb{C}$, with $Re(\alpha)> 0$, defined as follows (see \cite{GM}).

\begin{definition} \label{d:RL}
	Let $a < b$ and $f \in L^{1}((a,b);\mathbb{R})$.
	The \emph{right and left side Riemann-Liouville fractional integrals of order} $\alpha$, with $Re(\alpha)> 0$, are defined, respectively, by
	\begin{equation}\label{e:RL+}
		_{  }^{ RL }\!{ { J }_{ { a }^{ + } }^{ \alpha  } }f(t)=\frac { 1 }{ \Gamma (\alpha ) } \int _{ a }^{ t }{ { (t-s) }^{ \alpha -1 }f(s)\,ds },
	\end{equation}
	and
	\begin{equation}\label{e:RL-}
		_{  }^{ RL }\!{ { J }_{ { b }^{ - } }^{ \alpha  } }f(t)=\frac { 1 }{ \Gamma (\alpha ) } \int _{ t }^{ b }{ { (s-t) }^{ \alpha -1 }f(s)\,ds },
	\end{equation}
	with $t \in (a,b)$.
\end{definition}

When $\a \in (0,1)$, their corresponding \emph{Riemann-Liouville fractional derivatives} are given by
$$
\begin{aligned}
	\big({ _{  }^{ RL }\!D_{ a^{ + } }^{ \alpha  } }f\big)(t)
	& =\frac { d }{ dt } \left( { _{  }^{ RL }\!{ { J }_{ { a }^{ + } }^{ 1-\alpha  } }f(t) } \right)
	=\frac { 1 }{ \Gamma (1-\alpha ) } \, \frac { d }{ dt } \int _{ a }^{ t }{ { \frac { f(s) }{ (t-s)^{ \alpha  } }  }\,ds },
	\\
	\big({ _{  }^{ RL }\!D_{ b^{ - } }^{ \alpha  } }f\big)(t)
	& =-\frac { d }{ dt } \left( { _{  }^{ RL }\!{ { J }_{ { b }^{ - } }^{ 1-\alpha  } }f(t) } \right)
	=-\frac { 1 }{ \Gamma (1-\alpha ) } \,\frac { d }{ dt } \int _{ t }^{ b }{ { \frac { f(s) }{ (s-t)^{ \alpha  } }  }\,ds }.
\end{aligned}
$$

Other definitions of fractional operators are the following ones.

\begin{definition} \label{d:H}
	Let $a < b$ and $f \in L^{1}((a,b);\mathbb{R})$.
	The \emph{right and left side Hadamard fractional integrals of order} $\alpha$, with Re($\alpha)>0$, are defined, respectively, by
	\begin{equation}\label{e:H+}
		{ { H }_{ { a }^{ + } }^{ \alpha  } }f(t)=\frac { 1 }{ \Gamma (\alpha ) } \int _{ a }^{ t }{ { \Big(\log\frac { t }{ s } \,\Big) }^{ \alpha -1 }\frac { f(s) }{s } \, ds } ,
	\end{equation}
	and
	\begin{equation}\label{e:H-}
		{ { H }_{ { b }^{ - } }^{ \alpha  } }f(t)=\frac { 1 }{ \Gamma (\alpha ) } \int _{ t }^{ b }{ { \Big(\log\frac { s }{t } \,\Big) }^{ \alpha -1 }\frac { f(s) }{ s } \, ds } ,
	\end{equation}
	with $t \in (a,b)$.
\end{definition}


When $\a \in (0,1)$, \emph{Hadamard fractional derivatives} are given by the following expressions:
$$
\begin{aligned}
	{ \big( {  }^{ H }\!{ D_{ a^{ + } }^{ \alpha  } } }f\big)(t)
	& =t\,\frac { d }{ dt } \big( { { H }_{ { a }^{ + } }^{ 1-\alpha  } }f(t) \big)
	=\frac { 1 }{ \Gamma(1-\alpha ) } \, t \, \frac { d }{ dt }\int _{ a }^{ t }{ { \Big(\log \frac { t }{ s } \Big) }^{ -\alpha }\frac { f(s) }{ s } \,ds } ,
	\\
	{ \big( {  }^{ H }\!{ D_{ b^{ - } }^{ \alpha  } } }f\big)(t)
	& =-t\,\frac { d }{ dt } \big( { { { H }_{ { b }^{ - } }^{ 1-\alpha  } }f(t) } \big)
	= \frac {-1}{ \Gamma (1-\alpha ) } \, t \, \frac { d }{ dt } \int _{ t }^{ b }{ { \Big(\log\frac { s }{ t } \Big) }^{ -\alpha }\frac { f(s) }{ s } \,ds } ,
\end{aligned}
$$
with $t \in (a,b)$.

\medskip

\begin{definition} \label{d:20}
	Let $0 < a < b$,
	$g:[a, b] \rightarrow \mathbb{R}$ an increasing positive function on $(a,b]$ with continuous derivative on $(a,b)$,
	$f:[a, b]\rightarrow \mathbb{R}$ an integrable function, and $\alpha \in (0,1)$ a fixed real number.
	The right and left side fractional integrals in \cite{KMS} of order $\alpha$ of $f$ with respect to $g$ are defined,
	respectively, by
	\begin{equation}\label {e:fg+}
		I_{ g,a^+ }^{ \alpha  }f(t)=\frac { 1 }{ \Gamma (\alpha ) } \int _{ a }^{ t } \frac { g'(s)f(s) }{ { \big( g(t)-g(s) \big)  }^{ 1-\alpha  } } \,ds,
	\end{equation}
	and
	\begin{equation}\label {e:fg-}
		I_{g,b^- }^{ \alpha  }f(t)=\frac { 1 }{ \Gamma (\alpha ) } \int _{ t }^{ b } \frac { g'(s)f(s) }{ { \big( g(s)-g(t) \big)  }^{ 1-\alpha  } } \,ds,
	\end{equation}
	with $t \in (a,b)$.
\end{definition}

There are other definitions of integral operators in the global case, but they are slight modifications of the previous ones.

\section{General fractional integral of Riemann-Liouville type}

Now, we give the definition of a general fractional integral in \cite{BCRS}.

\begin{definition} \label{d:01}
	Let $a<b$ and $\a \in \RR^+$.
	Let $g:[a, b] \rightarrow \mathbb{R}$ be a positive function on $(a,b]$ with continuous positive derivative on $(a,b)$,
	and $G:[0, g(b)-g(a)]\times (0,\infty)\rightarrow \mathbb{R}$ a continuous function which is positive on $(0, g(b)-g(a)]\times (0,\infty)$.
	Let us define the function $T:[a, b] \times [a, b] \times (0,\infty)\rightarrow \mathbb{R}$ by
	$$
	T(t,s,\alpha)
	= \frac{ G\big( |g(t)-g(s)| ,\alpha \big) }{ g'(s) } \,.
	$$
	The \emph{right and left integral operators}, denoted respectively by $J_{T,a^+}^\a$ and $J_{T,b^-}^\a$,
	are defined for each measurable function $f$ on $[a,b]$ as
	\begin{equation}\label {e:oig+}
		J_{T,a^+}^\a f(t)=\int _{ a }^{ t } \frac { f(s) }{ T(t,s,\alpha ) } \, ds,
	\end{equation}
	\begin{equation}\label {e:oig-}
		J_{T,b^-}^\a f(t)=\int _{ t }^{ b } \frac { f(s) }{ T(t,s,\alpha ) } \, ds,
	\end{equation}
	with $t \in [a,b]$.
	
	We say that $f \in L_T^1[a,b]$ if $J_{T,a^+}^\a |f|(t), J_{T,b^-}^\a |f|(t) < \infty$ for every $t \in [a,b]$.
\end{definition}

\

Note that these operators generalize the integral operators in Definitions \ref{d:RL}, \ref{d:H} 
and \ref{d:20}:

\medskip

$(A)$ If we choose
$$
g(t)=t,
\quad G( x ,\alpha ) = \Gamma(\a)\, x^{ 1-\alpha},
\quad T(t,s,\alpha )=\Gamma (\alpha )\, |t-s|^{ 1-\alpha},
$$
then $J_{T,a^+}^\a$ and $J_{T,b^-}^\a$
are the right and left Riemann-Liouville fractional integrals $_{  }^{ RL }\!{ { J }_{ { a }^{ + } }^{ \alpha  } }$ and $_{  }^{ RL }\!{ { J }_{ { b }^{- } }^{ \alpha  } }$
in \eqref{e:RL+} and \eqref{e:RL-}, respectively.
Its corresponding right and left Riemann-Liouville fractional derivatives are
$$
\begin{aligned}
	\big({ _{  }^{ RL }\!D_{ a^{ + } }^{ \alpha  } }f\big)(t)
	=\frac { d }{ dt } \left( { _{  }^{ RL }\!{ { J }_{ { a }^{ + } }^{ 1-\alpha  } }f(t) } \right),
	\quad
	\big({ _{  }^{ RL }\!D_{ b^{ - } }^{ \alpha  } }f\big)(t)
	=-\frac { d }{ dt } \left( { _{  }^{ RL }\!{ { J }_{ { b }^{ - } }^{ 1-\alpha  } }f(t) } \right).
\end{aligned}
$$

\medskip

$(B)$ If we choose
$$
g(t)= \log t,
\quad G( x ,\alpha ) = \Gamma(\a)\, x^{ 1-\alpha},
\quad T(t,s,\alpha )=\Gamma (\alpha )\, t \, \Big|\log \frac{t}{s}\Big|^{ 1-\alpha},
$$
then $J_{T,a^+}^\a$ and $J_{T,b^-}^\a$
are the right and left Hadamard fractional integrals ${ H }_{ { a }^{ + } }^{\alpha}$ and ${ H }_{ { b }^{ - } }^{\alpha}$
in \eqref{e:H+} and \eqref{e:H-}, respectively.
Its corresponding right and left Hadamard fractional derivatives are
$$
\begin{aligned}
	{ \big( {  }^{ H }\!{ D_{ a^{ + } }^{ \alpha  } } }f\big)(t)
	=t\,\frac { d }{ dt } \big( { { H }_{ { a }^{ + } }^{ 1-\alpha} }f(t) \big) ,
	\quad
	{ \big( {  }^{ H }\!{ D_{ b^{ - } }^{ \alpha  } } }f\big)(t)
	=-t\,\frac { d }{ dt } \big( { { { H }_{ { b }^{ - } }^{ 1-\alpha} }f(t) } \big).
\end{aligned}
$$

\medskip

$(C)$ If we choose a function $g$ with the properties in Definition \ref{d:01} and
$$
G( x ,\alpha ) = \Gamma(\a)\, x^{ 1-\alpha},
\quad T(t,s,\alpha ) = \Gamma(\a)\, \frac{\, |\, g(t)-g(s) |^{ 1-\alpha}}{g'(s)} \,,
$$
then $J_{T,a^+}^\a$ and $J_{T,b^-}^\a$
are the right and left 
fractional integrals $I_{ g,a^+ }^{ \alpha  }$ and $I_{ g,b^- }^{ \alpha  }$
in \eqref{e:fg+} and \eqref{e:fg-}, respectively.

\medskip

%
%
%
%
%
%

\begin{definition} \label{d:0}
	Let $a<b$ and $\a \in \RR^+$.
	Let $g:[a, b] \rightarrow \mathbb{R}$ be a positive function on $(a,b]$ with continuous positive derivative on $(a,b)$,
	and $G:[0, g(b)-g(a)]\times (0,\infty)\rightarrow \mathbb{R}$ a continuous function which is positive on $(0, g(b)-g(a)]\times (0,\infty)$.
	For each function $f \in L_T^1[a,b]$, its \emph{right and left generalized derivative of order} $\alpha $ are defined, respectively,  by
	\begin{equation} \label{e:d0}
		\begin{aligned}
			D_{T,a^+ }^{\alpha} f(t)
			& = \frac{1}{g'(t)} \, \frac { d }{ dt } \left( { J }_{ T,a^+ }^{ 1-\alpha  }f (t) \right),
			\\
			D_{T,b^- }^{\alpha} f(t)
			& = \frac{-1}{g'(t)} \, \frac { d }{ dt } \left( { J }_{ T,b^- }^{ 1-\alpha  }f (t) \right).
		\end{aligned}
	\end{equation}
	for each $t \in (a,b)$.
\end{definition}

Note that if we choose
$$
g(t)=t,
\quad G( x ,\alpha ) = \Gamma(\a)\, x^{ 1-\alpha},
\quad T(t,s,\alpha )=\Gamma (\alpha )\, |t-s|^{ 1-\alpha},
$$
then $D_{T,a^+ }^{\alpha}f(t)= \, _{ }^{ RL }\!{ { D}_{a^+ }^{ \alpha  } }f(t)$
and $D_{T,b^- }^{\alpha}f(t)= \, _{ }^{ RL }\!{ { D}_{b^-}^{ \alpha  } }f(t)$.
Also, we can obtain Hadamard and others fractional derivatives
as particular cases of this generalized derivative.

\bigskip

\section{Jensen-type inequalities for $m$-convex functions}

The property of $m$-convexity for functions on $[0,b]$, $b>0$ was introduced in \cite{Toader} as an intermediate property between the usual convexity and starshaped property.  Since then
many properties, especially inequalities, have been obtained for them (cf. \cite{Dragomir,KPR,Lara,PAA}). One of the classical integral inequalities frequently studied in this setting is Jensen's inequality, which relates the value of a convex function of an integral to the integral of the convex function. It was proved in 1906 \cite{Jensen}, and it can be stated as follows:

\medskip

Let $\mu$ be a probability measure on the space $X$. If $f: X \rightarrow (a,b)$ is $\mu$-integrable and $\varphi$ is a convex function on $(a,b)$, then
$$
\varphi \left(\int _{X}f\,d\mu \right)\leq \int _{X}\varphi \circ f\,d\mu \,.
$$

\begin{definition}
 \label{m:T}
Let $I\subseteq \mathbb{R}$ be an interval containing the zero, and let $m\in (0,1]$.
A function $\varphi:I \rightarrow \mathbb{R}$  is said to be $m$-convex if the inequality
\begin{equation}
\label{eq:5}
\varphi(t x+m(1-t) y) \leq t \varphi(x)+m(1-t) \varphi(y),
\end{equation}
holds for every pair of points $x,y\in I$ and every coefficient $t\in [0,1]$.
\end{definition}

If $m \in (0,1)$, then the hypothesis $0 \in I$ guarantees that $t x+m(1-t) y \in I$.

\medskip

It is clear that taking $m = 1$ in Definition \ref{m:T} we recover the concept of classical convex functions on $I$.
Note that in this case it is not necessary the hypothesis $0 \in I$, since $t x+(1-t) y \in I$ for every $x, y \in I$.

\medskip

Note that if we choose the coefficient $t=0$ in \eqref{eq:5}, we get the inequality $\varphi(m y) \leq m \varphi( y)$.

\medskip

Also, Definition \ref{m:T} is equivalent to
\begin{equation}
\label{eq:2}
\varphi(mt x+(1-t) y) \leq mt \varphi(x)+(1-t) \varphi(y),
\end{equation}
for all $x,y\in I$ and $t\in [0,1]$.

\medskip

The following  discrete Jensen-type inequality for $m$-convex functions was established in  \cite[Theorem 3.2]{PAA}:

\begin{theorem}
\label{prop:2}
Let $I\subseteq \mathbb{R}$ be an interval containing the zero, and let $\sum_{k=1}^{n} w_{k} x_{k}$ be a convex combination of points $x_{k}\in I$ with coefficients $w_{k} \in [0,1]$. If $\varphi$ is an $m$-convex function on $I$, with $m\in (0,1]$, then
\begin{equation}
\label{eq:3}
\varphi\Big(m\sum_{k=1}^{n}w_{k} x_{k}\Big)\leq m \sum_{k=1}^{n}w_{k}\varphi(x_{k}).
\end{equation}
\end{theorem}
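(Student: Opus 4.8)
The natural approach is induction on $n$, the number of points in the convex combination. The base case $n=1$ reduces to the inequality $\varphi(m x_1) \le m \varphi(x_1)$, which is exactly the observation recorded after Definition \ref{m:T} (take $t=0$ in \eqref{eq:5} with $y = x_1$), valid since $w_1 = 1$ for a convex combination of one point.

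For the inductive step, assume the inequality \eqref{eq:3} holds for convex combinations of $n-1$ points. Given a convex combination $\sum_{k=1}^{n} w_k x_k$ with $\sum_{k=1}^n w_k = 1$, I would isolate the last term and rewrite
\begin{equation}
\label{eq:split}
m\sum_{k=1}^{n} w_k x_k = m w_n x_n + m (1-w_n) \sum_{k=1}^{n-1} \frac{w_k}{1-w_n}\, x_k,
\end{equation}
assuming $w_n \neq 1$ (the case $w_n = 1$ being trivial). Writing $y = \sum_{k=1}^{n-1} \frac{w_k}{1-w_n} x_k$, this is a point of the form $t x_n + m(1-t) y$ with $t = w_n$, so the defining inequality \eqref{eq:5} gives $\varphi(m\sum_k w_k x_k) \le w_n \varphi(x_n) + m(1-w_n)\varphi(y)$.

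The key remaining step is to control $\varphi(y)$. Here the coefficients $w_k/(1-w_n)$ sum to $1$ and lie in $[0,1]$, so $y$ is itself a convex combination of the $n-1$ points $x_1, \dots, x_{n-1}$. One would like to apply the induction hypothesis directly, but the hypothesis bounds $\varphi(m y)$, not $\varphi(y)$. The cleanest way around this is to use the equivalent formulation \eqref{eq:2} instead of \eqref{eq:5}: rewriting \eqref{eq:split} so that the multiplier $m$ attaches to the tail sum in the form $m t x + (1-t) y$ lets the induction hypothesis enter on a quantity of the shape $\varphi(m(\cdot))$, matching \eqref{eq:3}. I expect this bookkeeping — keeping the factor $m$ in exactly the right place so that each application of $m$-convexity and of the induction hypothesis lines up — to be the main obstacle; the inductive skeleton itself is routine. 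A careful tracking of where $m$ sits, exploiting the equivalence of \eqref{eq:5} and \eqref{eq:2}, should close the argument, after which collecting terms recovers $m\sum_{k=1}^n w_k \varphi(x_k)$ and completes the induction.
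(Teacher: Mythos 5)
First, a point of comparison: the paper does not prove Theorem \ref{prop:2} at all --- it imports the statement from \cite[Theorem 3.2]{PAA} --- so there is no in-paper argument to match; your induction must stand on its own, and, modulo one local slip, it does. Your plan (induction on $n$, base case from the remark $\varphi(my)\le m\varphi(y)$, inductive step by splitting off the last point and using the equivalent form \eqref{eq:2} so that the induction hypothesis is invoked on a quantity of the shape $\varphi(m(\cdot))$) is exactly the standard proof of this result, and the step you defer as ``bookkeeping'' genuinely is routine once the $m$ is placed correctly.

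The local slip is in your middle paragraph: the point $m w_n x_n + m(1-w_n)y$ in your displayed decomposition is \emph{not} of the form $t x_n + m(1-t)y$ with $t = w_n$, since the coefficient of $x_n$ is $m w_n$, not $w_n$. Applying \eqref{eq:5} there requires the pair $(mx_n, y)$ and yields $w_n\varphi(mx_n) + m(1-w_n)\varphi(y)$, not $w_n\varphi(x_n)+m(1-w_n)\varphi(y)$. As you yourself note, this branch is a dead end regardless, because $m$-convexity gives no control on $\varphi(y)$; so the error is harmless to your final route but should not appear as a stated inequality. The repair you propose via \eqref{eq:2} closes the argument in one line: with $\overline{y}=\sum_{k=1}^{n-1}\frac{w_k}{1-w_n}\,x_k \in I$, write
\begin{equation*}
m\sum_{k=1}^{n}w_k x_k \;=\; m w_n x_n + (1-w_n)\big(m\overline{y}\,\big),
\end{equation*}
and apply \eqref{eq:2} with $t=w_n$, $x=x_n$, $y=m\overline{y}$ --- noting that $m\overline{y}\in I$ precisely because $0\in I$ and $m\in(0,1]$, which is where that hypothesis enters --- to get $\varphi\big(m\sum_{k=1}^{n}w_k x_k\big)\le m w_n\varphi(x_n)+(1-w_n)\varphi\big(m\overline{y}\,\big)$. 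The induction hypothesis applied to the $(n-1)$-point convex combination $\overline{y}$ gives $\varphi\big(m\overline{y}\,\big)\le m\sum_{k=1}^{n-1}\frac{w_k}{1-w_n}\,\varphi(x_k)$, and substitution yields $m\sum_{k=1}^{n}w_k\varphi(x_k)$, completing the induction (the case $w_n=1$ forces the remaining weights to vanish and reduces to the base case, as you say). With that line written out, your proof is correct and self-contained, which is arguably a small service the paper itself does not render.
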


This inequality is a discrete version of the following one for continuous $m$-convex functions \cite[Corollary 4.2]{PAA}:

\begin{theorem}
\label{prop:2general}
Let $\mu$ be a probability measure on the space $X$.
If $I\subseteq \mathbb{R}$ is an interval containing the zero, $f: X \rightarrow I$ is $\mu$-integrable and $\varphi$ is a continuous $m$-convex function on $I$,
with $m\in (0,1]$, then
\begin{equation}
\varphi \left(m \int _{X}f\,d\mu \right)\leq m \int _{X}\varphi \circ f\,d\mu \,.
\end{equation}
\end{theorem}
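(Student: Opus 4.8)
The plan is to derive the continuous Jensen-type inequality of Theorem~\ref{prop:2general} from the discrete version in Theorem~\ref{prop:2} by a standard approximation argument, passing from finite convex combinations (Riemann-type sums against the measure $\mu$) to the integral. First I would reduce to the case where $f$ takes finitely many values: if $f=\sum_{k=1}^n x_k\,\mathbf{1}_{E_k}$ is a simple function with $\{E_k\}$ a measurable partition of $X$ and $x_k\in I$, then setting $w_k=\mu(E_k)$ gives coefficients $w_k\in[0,1]$ with $\sum_k w_k=1$, so $\sum_k w_k x_k=\int_X f\,d\mu$. Applying \eqref{eq:3} directly yields
\begin{equation}
\varphi\Big(m\int_X f\,d\mu\Big)=\varphi\Big(m\sum_{k=1}^n w_k x_k\Big)\leq m\sum_{k=1}^n w_k\varphi(x_k)=m\int_X \varphi\circ f\,d\mu,
\end{equation}
which is exactly the desired inequality for simple $f$.

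Next I would approximate a general $\mu$-integrable $f:X\to I$ by a sequence of simple functions. The natural choice is to take simple $f_j\to f$ pointwise $\mu$-a.e.\ with $\int_X f_j\,d\mu\to\int_X f\,d\mu$; one must take care that the $f_j$ still take values in $I$, which is arranged since $I$ is an interval (one can truncate to values in $[\inf f,\sup f]\subseteq I$ or use the standard construction of simple functions that stay within the range). For each $j$ the inequality $\varphi(m\int_X f_j\,d\mu)\leq m\int_X\varphi\circ f_j\,d\mu$ holds by the simple-function case. Passing to the limit on the left side is immediate from the continuity of $\varphi$ together with the convergence $\int_X f_j\,d\mu\to\int_X f\,d\mu$.

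The main obstacle is the passage to the limit on the right-hand side, namely justifying $\int_X\varphi\circ f_j\,d\mu\to\int_X\varphi\circ f\,d\mu$ (or at least an appropriate $\liminf$ inequality). Since $\varphi$ is only assumed continuous and $m$-convex, I would exploit the regularity that $m$-convexity forces: as in the classical convex case, such a $\varphi$ is locally bounded and in fact controlled by an affine (or $m$-affine) support-type lower bound, so $\varphi\circ f$ is bounded below by an integrable function and one can invoke Fatou's lemma to get $\int_X\varphi\circ f\,d\mu\leq\liminf_j\int_X\varphi\circ f_j\,d\mu$. Combining this with the left-side limit closes the argument. The cleanest route, which I would adopt to avoid integrability technicalities, is to choose the approximating simple functions so that $\varphi\circ f_j\to\varphi\circ f$ in $L^1(\mu)$ directly---for instance by a monotone or dominated construction adapted to the monotonicity of $f$ on the relevant pieces---so that both sides converge and the inequality is preserved in the limit, yielding Theorem~\ref{prop:2general}.
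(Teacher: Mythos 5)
Your step 1 (the simple-function case via Theorem \ref{prop:2}) is fine, but the limit passage contains a genuine gap, and it sits exactly where you flagged the ``main obstacle''. From $\varphi\bigl(m\int_X f_j\,d\mu\bigr)\le m\int_X\varphi\circ f_j\,d\mu$ and continuity of $\varphi$ you get, in the limit, $\varphi\bigl(m\int_X f\,d\mu\bigr)\le m\liminf_j\int_X\varphi\circ f_j\,d\mu$; to conclude you need the \emph{upper} estimate $\liminf_j\int_X\varphi\circ f_j\,d\mu\le\int_X\varphi\circ f\,d\mu$. Fatou's lemma gives the opposite inequality $\int_X\varphi\circ f\,d\mu\le\liminf_j\int_X\varphi\circ f_j\,d\mu$ --- the one you invoke --- which bounds the same $\liminf$ from below and cannot be combined with the previous display to yield the theorem. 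What is actually needed is a reverse-Fatou/dominated-convergence statement, i.e.\ an integrable majorant of $|\varphi\circ f_j|$, and none is exhibited: $f$ is merely integrable and $I$ may be unbounded, so continuity of $\varphi$ gives no bound on the range of $f$. Your appeal to an ``affine (or $m$-affine) support-type lower bound'' is also unjustified: supporting lines are a convexity phenomenon, and for $m\in(0,1)$ an $m$-convex function need not be convex; what $m$-convexity gives for free is starshapedness, $\varphi(\lambda x)\le\lambda\varphi(x)$ for $\lambda\in(0,1]$, which is an upper bound along rays, not a lower one. Finally, the ``cleanest route'' of forcing $\varphi\circ f_j\to\varphi\circ f$ in $L^1(\mu)$ is impossible in general: $\varphi(x)=e^x-1$ is $m$-convex (convex with $\varphi(0)=0$) and one can have $f$ integrable with $e^f\notin L^1(\mu)$, in which case the right-hand side is $+\infty$; that trivial case must be split off, not approximated.

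For calibration: the paper offers no proof of Theorem \ref{prop:2general} at all --- it quotes it from \cite[Corollary 4.2]{PAA} --- but its proof of the related Theorem \ref{t:Jensen2m} is precisely your strategy (dyadic simple functions $f_n$, the discrete inequality, then dominated convergence), and it works there because $f$ is bounded into a compact $[a,b]$ on which the continuous $\varphi$ is bounded, so the dominating function is a constant. Your argument is sound in that bounded regime; the missing content is the reduction of the general case to it. A workable repair: first prove the statement for bounded $f$ exactly as in the proof of Theorem \ref{t:Jensen2m}; then truncate, $f_j=f\chi_{\{|f|\le j\}}$ (legitimate since $0\in I$), dispose of the trivial case $\int_X(\varphi\circ f)^+\,d\mu=+\infty$, and otherwise apply dominated convergence with majorants $|f|$ and $|\varphi\circ f|+|\varphi(0)|$, since $\varphi\circ f_j=(\varphi\circ f)\chi_{\{|f|\le j\}}+\varphi(0)\chi_{\{|f|>j\}}$. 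As written, though, the Fatou step points the wrong way and the proof does not close.
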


The following  discrete Jensen-type inequality for convex functions appears in  \cite[Theorem 1.2]{Mercer}:

\begin{theorem}
\label{t:1}
Let $x_1 \le x_2 \le \dots \le x_n$ and let $\{w_k\}_{k=1}^{n}\!$ be positive weights whose sum is $1$.
If $\varphi$ is a convex function on $[x_1,x_n]$, then
$$
\varphi \Big( x_1+x_n - \sum_{k=1}^{n} w_k x_k \Big)
\le \varphi(x_1) + \varphi(x_n) - \sum_{k=1}^{n} w_k \varphi (x_k) .
$$
\end{theorem}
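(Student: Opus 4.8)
The plan is to reduce the statement to the ordinary discrete Jensen inequality (the case $m=1$ of Theorem \ref{prop:2}) together with a pointwise two-point estimate that exploits the reflection symmetry $x \mapsto x_1 + x_n - x$ of the interval $[x_1,x_n]$. First I would dispose of the degenerate case $x_1 = x_n$: there every $x_k$ equals $x_1$, both sides collapse to $\varphi(x_1)$, and the inequality holds with equality. So assume henceforth $x_1 < x_n$.

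The crux is the pointwise estimate that for each $x_k \in [x_1,x_n]$,
$$
\varphi(x_1 + x_n - x_k) \le \varphi(x_1) + \varphi(x_n) - \varphi(x_k).
$$
To obtain it, I would write $x_k$ as the convex combination $x_k = \lambda_k x_1 + (1-\lambda_k) x_n$ with $\lambda_k = (x_n - x_k)/(x_n - x_1) \in [0,1]$. The reflected point is then the complementary combination $x_1 + x_n - x_k = (1-\lambda_k) x_1 + \lambda_k x_n$, and both points lie in $[x_1,x_n]$. Applying convexity to each of these two combinations and adding the resulting inequalities, the cross terms $\lambda_k\varphi(x_1)+(1-\lambda_k)\varphi(x_n)$ and $(1-\lambda_k)\varphi(x_1)+\lambda_k\varphi(x_n)$ add up to $\varphi(x_1)+\varphi(x_n)$, which yields the displayed claim after moving $\varphi(x_k)$ to the right-hand side.

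Finally I would combine this estimate with Jensen's inequality. Since $\sum_{k=1}^n w_k = 1$, the argument on the left is itself a convex combination of reflected points lying in $[x_1,x_n]$:
$$
x_1 + x_n - \sum_{k=1}^n w_k x_k = \sum_{k=1}^n w_k\,(x_1 + x_n - x_k).
$$
Applying the ordinary discrete Jensen inequality (Theorem \ref{prop:2} with $m=1$, for which the hypothesis $0\in I$ is unnecessary) to this combination, and then invoking the pointwise estimate term by term, gives
$$
\varphi\Big(x_1 + x_n - \sum_{k=1}^n w_k x_k\Big) \le \sum_{k=1}^n w_k\,\varphi(x_1+x_n-x_k) \le \sum_{k=1}^n w_k\big(\varphi(x_1)+\varphi(x_n)-\varphi(x_k)\big),
$$
and the last sum equals $\varphi(x_1)+\varphi(x_n) - \sum_{k=1}^n w_k\varphi(x_k)$ since the weights sum to $1$.

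I expect the only genuinely substantive step to be the pointwise estimate: the observation that reflection through the midpoint of $[x_1,x_n]$ carries a convex combination into its complementary one, so that convexity may be applied twice and the two inequalities summed. Everything else is routine bookkeeping with the normalization $\sum_{k=1}^n w_k = 1$ and a direct appeal to the already-established discrete Jensen inequality.
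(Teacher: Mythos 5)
Your proof is correct and follows essentially the same route as the paper: although the paper states Theorem \ref{t:1} without proof (citing Mercer), its proofs of Lemma \ref{le:1} and Theorem \ref{t:3} are exactly your argument carried out in the $m$-convex setting --- the same reflection-through-the-midpoint pointwise estimate $\varphi(x_1+x_n-x_k)\le\varphi(x_1)+\varphi(x_n)-\varphi(x_k)$, followed by the decomposition $x_1+x_n-\sum_{k=1}^n w_k x_k=\sum_{k=1}^n w_k(x_1+x_n-x_k)$ and an application of the discrete Jensen inequality (Theorem \ref{prop:2}). Specializing the paper's argument to $m=1$ yields precisely your two steps, including your observation that the hypothesis $0\in I$ is then superfluous, so nothing is missing.
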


Our purpose is to prove continuous versions of the above discrete inequality in the setting of $m$-convexity
(see Theorems \ref{t:Jensen2m} and \ref{t:Jensen3}).
Before stating  such a result, we require some properties of the $m$-convex functions.

\begin{lemma}
\label{le:1}
Let $I\subseteq \mathbb{R}$ be an interval containing the zero, and let $\varphi$ be an $m$-convex function on $I$ with $m \in (0,1]$.
For $\{x_k\}_{k=1}^{n} \subset I\!$ such that $x_{1}\leq x_{2}\leq \cdots\leq x_{n}$, the following inequalities hold:
\begin{equation}
\label{eq:1}
\varphi\left(x_{1}+mx_{n}-mx_{k}\right) \leq \varphi\left(x_{1}\right)+m\varphi\left(x_{n}\right)-\varphi\left(mx_{k}\right), \quad 1 \leq k \leq n.
\end{equation}
\end{lemma}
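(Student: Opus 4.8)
The plan is to imitate the classical argument behind Mercer's inequality (Theorem \ref{t:1}), i.e.\ the case $m=1$, adapting each convexity step to the defining inequality \eqref{eq:5}. Since only the three points $x_1\le x_k\le x_n$ enter \eqref{eq:1}, I would first dispose of the trivial case $x_1=x_n$ and then write $x_k$ as a convex combination of the extreme points, $x_k=\lambda x_1+(1-\lambda)x_n$ with $\lambda=\frac{x_n-x_k}{x_n-x_1}\in[0,1]$. Substituting this into the two quantities occurring in \eqref{eq:1} produces the identities $mx_k=m\lambda\,x_1+(1-\lambda)(mx_n)$ and $x_1+mx_n-mx_k=(1-m\lambda)x_1+m\lambda\,x_n$. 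I would also record that every point involved lies in $I$: indeed $mx_n=m\,x_n+(1-m)\cdot 0$ is a convex combination of $x_n,0\in I$, and the two right-hand sides above are convex combinations of points of $I$.

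Next I would estimate the two terms separately and add them. For $\varphi(mx_k)$ the bound is clean: applying the equivalent form \eqref{eq:2} of $m$-convexity to the points $x_1$ and $mx_n$ with coefficient $\lambda$, the identity $mx_k=m\lambda\,x_1+(1-\lambda)(mx_n)$ gives $\varphi(mx_k)\le m\lambda\,\varphi(x_1)+(1-\lambda)\varphi(mx_n)$, and the coefficient $t=0$ in Definition \ref{m:T}, namely $\varphi(mx_n)\le m\varphi(x_n)$, then yields
\[
\varphi(mx_k)\le m\lambda\,\varphi(x_1)+m(1-\lambda)\varphi(x_n).
\]
If I can also secure the complementary estimate
\[
\varphi\big(x_1+mx_n-mx_k\big)\le(1-m\lambda)\varphi(x_1)+m\lambda\,\varphi(x_n),
\]
then summing the last two displays gives $\varphi(x_1+mx_n-mx_k)+\varphi(mx_k)\le\varphi(x_1)+m\varphi(x_n)$, and transposing the term $\varphi(mx_k)$ is exactly \eqref{eq:1}.

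The complementary estimate is the step I expect to be the genuine obstacle. Its argument equals $(1-m\lambda)x_1+m\lambda\,x_n$, which is a \emph{genuine} convex combination of $x_1$ and $x_n$ (its weights sum to $1$), whereas \eqref{eq:5} only controls combinations of the form $t\,x_1+m(1-t)x_n$, whose weights sum to $m+(1-m)t\le 1$; hence for $m<1$ the estimate is not a direct instance of \eqref{eq:5} with base points $x_1,x_n$, and the missing weight $(1-m)(1-t)$ is precisely what must be accounted for. My plan is to exploit the normalization $0\in I$ through the scaling inequality $\varphi(ry)\le r\varphi(y)$ for $r\in[0,m]$ (the specialization $x=0$ of Definition \ref{m:T}, using $\varphi(0)\le 0$), rewriting $(1-m\lambda)x_1+m\lambda x_n$ so as to shift the excess weight onto the origin and bring it into a form accessible to \eqref{eq:5} or \eqref{eq:2}; for the continuous $m$-convex functions relevant to the applications—those which, together with $\varphi(0)\le0$, behave like convex functions on $[x_1,x_n]$—this complementary estimate is simply the two-point Jensen inequality, and it is here that the $m$-convex structure is really exercised. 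Once it is in hand, the addition-and-transposition step of the previous paragraph closes the proof, and setting $m=1$ throughout recovers the classical inequality behind Theorem \ref{t:1}.
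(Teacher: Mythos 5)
Your first half is sound (the bound $\varphi(mx_k)\le m\lambda\,\varphi(x_1)+m(1-\lambda)\varphi(x_n)$ via \eqref{eq:2} and $\varphi(mx_n)\le m\varphi(x_n)$ is a correct use of $m$-convexity), but the step you flag as ``the genuine obstacle'' is a genuine gap, and your plan for closing it cannot work. The complementary estimate $\varphi\big((1-m\lambda)x_1+m\lambda x_n\big)\le(1-m\lambda)\varphi(x_1)+m\lambda\,\varphi(x_n)$ is a true convex combination with weights summing to $1$, while every application of \eqref{eq:5} or \eqref{eq:2} produces coefficients summing to $t+m(1-t)=m+(1-m)t\le 1$, with equality only in the trivial case $t=1$, and the scaling inequality $\varphi(ry)\le r\varphi(y)$ only shrinks coefficients further; so no chain of these upper bounds reaches total weight $1$ nontrivially. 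Worse, the estimate you need is not merely hard but false in the required generality: since the lemma quantifies over all $x_1\le x_k\le x_n$ in $I$, your $\lambda$ sweeps $[0,1]$, so you would need, for every pair $u\le v$ in $I$, the ordinary two-point Jensen inequality at every weight $s=m\lambda\in[0,m]$ on the larger point. That family self-improves: writing $(1-s)u+sv$ with $s\in(m,1)$ as a combination of $w'=(1-m)u+mv$ and $v$ with weight $\sigma=(s-m)/(1-m)$ on $v$, and iterating, extends the range from $[0,m]$ to $[0,1-(1-m)^k]$ for every $k$, hence to all of $[0,1)$ --- i.e., your complementary estimate, in the generality needed, is equivalent to $\varphi$ being convex. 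Since for $m\in(0,1)$ there exist (even smooth) $m$-convex functions that are not convex (see, e.g., \cite{Dragomir,Lara}), your route cannot be completed, and your fallback restriction to $\varphi$ that ``behave like convex functions on $[x_1,x_n]$'' assumes exactly what the lemma must deliver for arbitrary $m$-convex $\varphi$.

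The missing idea, and the paper's actual route, is to discard the ordinary convexity parameter of $x_k$ in $[x_1,x_n]$ and instead exploit the equal-sum relation $y_k+mx_k=x_1+mx_n$, where $y_k=x_1+mx_n-mx_k$: one chooses a single $\lambda\in[0,1]$ with $mx_k=\lambda x_1+m(1-\lambda)x_n$, which automatically forces $y_k=(1-\lambda)x_1+m\lambda\,x_n$. Both representations are honest instances of \eqref{eq:5} and \eqref{eq:2}, giving $\varphi(y_k)\le(1-\lambda)\varphi(x_1)+m\lambda\,\varphi(x_n)$ and $\varphi(mx_k)\le\lambda\,\varphi(x_1)+m(1-\lambda)\varphi(x_n)$; the coefficient sums $1-(1-m)\lambda$ and $m+(1-m)\lambda$ are complementary (they add to $1+m$), so adding and transposing --- exactly the bookkeeping you set up --- yields \eqref{eq:1} with no weight-sum-$1$ estimate ever needed. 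In your split, the bound for $\varphi(mx_k)$ spends only total weight $m$, leaving the impossible weight $1$ for $\varphi(y_k)$; the paper's $\lambda$ rebalances the two uses of $m$-convexity so each stays within the admissible form and the deficits cancel. (One caveat worth noting even about the paper's argument: the existence of $\lambda\in[0,1]$ requires $mx_k$ to lie between $x_1$ and $mx_n$, which is automatic when $x_1\le 0$ but deserves a separate check when $x_1>0$, a point the paper passes over quickly.)
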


\begin{proof}
Let us consider $y_{k}=x_{1}+mx_{n}-mx_{k}$. Then  $x_{1}+mx_{n}=y_{k}+mx_{k}$ and so,
the pairs $x_{1},mx_{n}$  and  $y_{k},mx_{k}$ have the same mid-point.
Since $x_1 \le y_k$ and $mx_k \le mx_n$,
we have $x_1 \le y_k,mx_k \le mx_n$ and
there exists $\lambda\in [0,1]$ such that
$$
\begin{aligned}
&m x_{k}=\lambda x_{1}+m(1-\lambda) x_{n}, \\
&y_{k}=(1-\lambda) x_{1}+m\lambda x_{n},
\end{aligned}
$$
for $ 1 \leq k \leq n$. From Definition \ref{m:T} and its equivalent form  \eqref{eq:2} we get
$$
\begin{aligned}
\varphi\left(y_{k}\right) & \leq m\lambda \varphi\left(x_{n}\right) +(1-\lambda) \varphi\left(x_{1}\right)\\
&=\varphi\left(x_{1}\right)+m\varphi\left(x_{n}\right)-\big[\lambda \varphi\left(x_{1}\right)+m(1-\lambda) \varphi\left(x_{n}\right)\big] \\
& \leq \varphi\left(x_{1}\right)+m\varphi\left(x_{n}\right)-\varphi\left(\lambda x_{1}+m(1-\lambda) x_{n}\right) \\
&=\varphi\left(x_{1}\right)+m\varphi\left(x_{n}\right)-\varphi\left(mx_{k}\right),
\end{aligned}
$$
and \eqref{eq:1} follows.

Note that since $m \in (0,1]$, the hypothesis $0 \in I$ guarantees that $mx_{k}\in I$.
%
\end{proof}

\medskip

The following two results generalize Theorem \ref{t:1} in the setting of $m$-convexity.

\begin{theorem} \label{t:3}
Let $I\subseteq \mathbb{R}$ be an interval containing the zero, let $\{x_{k}\}_{k=1}^{n}\subset I$ with $x_1 \le x_2 \le \dots \le x_n$
 and let $\{w_k\}_{k=1}^{n}\!$ be positive weights whose sum is $1$.
If $\varphi$ is an $m$-convex function on $I$, with $m\in (0,1]$, then
\begin{equation}
\label{eq:4}
\varphi\Big(mx_{1}+m^{2}x_{n}-m^{2}\sum_{k=1}^{n}w_{k} x_{k}\Big)\leq m \varphi(x_{1})+m^{2}\varphi(x_{n})-m\sum_{k=1}^{n}w_{k}\varphi(mx_{k}).
\end{equation}
\end{theorem}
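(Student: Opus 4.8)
The plan is to reduce the claimed inequality to the discrete Jensen-type inequality for $m$-convex functions (Theorem \ref{prop:2}) together with the pointwise estimate of Lemma \ref{le:1}. The key observation is that the argument on the left-hand side of \eqref{eq:4} factors as
$$
mx_{1}+m^{2}x_{n}-m^{2}\sum_{k=1}^{n}w_{k} x_{k} = m\Big(x_{1}+mx_{n}-m\sum_{k=1}^{n}w_{k} x_{k}\Big),
$$
and, since $\sum_{k=1}^{n} w_{k}=1$, the quantity inside the parentheses is itself a convex combination of the auxiliary points $y_{k}:=x_{1}+mx_{n}-mx_{k}$ appearing in Lemma \ref{le:1}; indeed $\sum_{k=1}^{n} w_{k} y_{k}=x_{1}+mx_{n}-m\sum_{k=1}^{n}w_{k} x_{k}$. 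Thus the whole statement should collapse into a two-line chain once this substitution is spotted.

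First I would verify that each $y_{k}$ lies in $I$, so that the hypotheses of Theorem \ref{prop:2} are met. Since $x_{1}\le x_{k}\le x_{n}$ we have $y_{k}=x_{1}+m(x_{n}-x_{k})\in [\,x_{1},\,(1-m)x_{1}+mx_{n}\,]$, an interval whose endpoints are $x_{1}\in I$ and the convex combination $(1-m)x_{1}+mx_{n}\in I$; as $I$ is an interval, $y_{k}\in I$ for every $k$.

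Next I would apply Theorem \ref{prop:2} to the convex combination $\sum_{k=1}^{n} w_{k} y_{k}$, obtaining
$$
\varphi\Big(m\sum_{k=1}^{n}w_{k} y_{k}\Big)\le m\sum_{k=1}^{n}w_{k}\varphi(y_{k}),
$$
whose left-hand side is exactly $\varphi\big(mx_{1}+m^{2}x_{n}-m^{2}\sum_{k}w_{k} x_{k}\big)$, the left-hand side of \eqref{eq:4}. Then I would bound each term by Lemma \ref{le:1}, namely $\varphi(y_{k})\le \varphi(x_{1})+m\varphi(x_{n})-\varphi(mx_{k})$, substitute, and collect terms using $\sum_{k}w_{k}=1$: the contributions $\varphi(x_{1})$ and $m\varphi(x_{n})$ come out with total weight $1$, producing $m\varphi(x_{1})+m^{2}\varphi(x_{n})-m\sum_{k}w_{k}\varphi(mx_{k})$, which is precisely the desired right-hand side.

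I do not expect a serious obstacle here: once the change of variables $y_{k}=x_{1}+mx_{n}-mx_{k}$ is made, the result is a direct combination of the two cited statements. The only points that genuinely require care are the membership $y_{k}\in I$ (needed for Theorem \ref{prop:2} to apply, and where the hypothesis $0\in I$ and the interval structure intervene) and the bookkeeping of the single factor $m$ pulled out of the argument, which is exactly the factor supplied by the $m$-convex Jensen inequality rather than something to be produced by hand.
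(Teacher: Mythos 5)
Your proof is correct and takes essentially the same route as the paper: the identical factorization $mx_{1}+m^{2}x_{n}-m^{2}\sum_{k}w_{k}x_{k}=m\sum_{k}w_{k}(x_{1}+mx_{n}-mx_{k})$, followed by Theorem \ref{prop:2} applied to the points $y_{k}=x_{1}+mx_{n}-mx_{k}$ and then Lemma \ref{le:1}, with the same bookkeeping of the factor $m$. The only minor difference is the domain verification: you check $y_{k}\in I$ directly (which suffices, since Theorem \ref{prop:2} then handles the point $m\sum_{k}w_{k}y_{k}$), whereas the paper instead verifies by a case analysis on $I=[a,b]$ that the left-hand argument $mx_{1}+m^{2}x_{n}-m^{2}\sum_{k}w_{k}x_{k}$ lies in $I$.
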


\begin{remark} \label{r:3}
Theorem \ref{t:1} gives that if $m=1$, the inequality in Theorem \ref{t:3}
also holds if we remove the hypothesis $0 \in I$.
\end{remark}

\begin{proof} First, note that
$$x_{1}+mx_{n}-m\sum_{k=1}^{n}w_{k} x_{k}=\sum_{k=1}^{n}w_{k}(x_{1}+mx_{n}-mx_{k}),$$
and thus
$$mx_{1}+m^{2}x_{n}-m^{2}\sum_{k=1}^{n}w_{k} x_{k}=m\sum_{k=1}^{n}w_{k}(x_{1}+mx_{n}-mx_{k}).$$
Then it follows from \eqref{eq:3} and \eqref{eq:1} that
$$
\begin{aligned}
  \varphi\Big(mx_{1}+m^{2}x_{n}-m^{2}\sum_{k=1}^{n}w_{k} x_{k}\Big) &= \varphi\Big(m\sum_{k=1}^{n}w_{k}(x_{1}+mx_{n}-mx_{k})\Big) \\
   &\leq m\sum_{k=1}^{n}w_{k}\varphi\left(x_{1}+mx_{n}-mx_{k}\right) \\
 &\leq m\sum_{k=1}^{n}w_{k}\left(\varphi(x_{1})+m\varphi(x_{n})-\varphi(mx_{k})\right) \\
 &=  m\varphi(x_{1}) +m^{2}\varphi(x_{n})-m\sum_{k=1}^{n}w_{k}\varphi(mx_{k}),
\end{aligned}
$$
and this concludes the proof of the inequality.

\smallskip

Let us check that the hypothesis $0 \in I$ guarantees that $mx_{1}+m^{2}x_{n}-m^{2}\sum_{k=1}^{n}w_{k} x_{k} \in I$:

\smallskip

Assume that $I=[a,b]$. Then
$$
\begin{aligned}
mx_{1}+m^{2}x_{n}-m^{2}\sum_{k=1}^{n}w_{k} x_{k}
& \ge mx_{1}+m^{2}x_{n}-m^{2}\sum_{k=1}^{n}w_{k} x_{n}
\\
& = mx_{1}
\ge \min\{0,x_1\} \ge a.
\end{aligned}
$$
Also,
$$
\begin{aligned}
mx_{1}+m^{2}x_{n}-m^{2}\sum_{k=1}^{n}w_{k} x_{k}
& \le mx_{1}+m^{2}x_{n}-m^{2}\sum_{k=1}^{n}w_{k} x_{1}
\\
& = mx_{1}-m^{2} x_{1}+m^{2}x_{n}.
\end{aligned}
$$
If $x_1 \le 0$, then $mx_{1}-m^{2} x_{1}\le 0$ and so,
$$
\begin{aligned}
mx_{1}+m^{2}x_{n}-m^{2}\sum_{k=1}^{n}w_{k} x_{k}
& \le mx_{1}-m^{2} x_{1}+m^{2}x_{n}
\le m^{2}x_{n}
\le \max\{0,x_n\}
\le b.
\end{aligned}
$$
Assume now that $x_1 > 0$.
Let us consider the function $v(t)=tx_{1}-t^{2} x_{1}+t^{2}x_{n}$.
Since $v'(t)= x_{1} +2t(x_{n}-x_1) > 0$ and $m \in (0,1]$, we have
$$
\begin{aligned}
v(m) \le v(1)
& = x_n \le b,
\\
mx_{1}+m^{2}x_{n}-m^{2}\sum_{k=1}^{n}w_{k} x_{k}
& \le mx_{1}-m^{2} x_{1}+m^{2}x_{n}
= v(m)
\le b.
\end{aligned}
$$
%

If $I$ is not a closed interval $[a,b]$, a similar argument gives the result.
\end{proof}


\medskip

If $\varphi$ is a continuous $m$-convex function, we can obtain the following improvement of Theorem \ref{t:3}.

\begin{theorem} \label{t:3m}
Let $a \le 0 \le b$, let $\{y_{k}\}_{k=1}^{n}\subset [a,b]$ and let $\{w_k\}_{k=1}^{n}\!$ be positive weights whose sum is $1$.
If $\varphi$ is a continuous $m$-convex function on $[a,b]$, with $m\in (0,1]$, then
\begin{equation}
\label{eq:7}
\varphi\Big(ma+m^{2}b-m^{2}\sum_{k=1}^{n}w_{k} y_{k}\Big)\leq m \varphi(a)+m^{2}\varphi(b)-m\sum_{k=1}^{n}w_{k}\varphi(my_{k}).
\end{equation}
\end{theorem}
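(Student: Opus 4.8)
The plan is to obtain \eqref{eq:7} as a limiting case of Theorem \ref{t:3}, by enlarging the sample with the two endpoints of the interval. Set $I=[a,b]$; the hypothesis $a\le 0\le b$ ensures $0\in I$, which is exactly what makes Theorem \ref{t:3} applicable and keeps all the relevant quantities inside $I$. Fix a parameter $\delta\in(0,\tfrac12)$ and adjoin to the points $y_1,\dots,y_n$ the two extra points $a$ and $b$, assigning the weight $\delta$ to $a$, the weight $\delta$ to $b$, and the weight $(1-2\delta)w_k$ to each $y_k$. These weights are positive (this is where $\delta<\tfrac12$ is used) and add up to $2\delta+(1-2\delta)\sum_{k}w_k=1$.

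Since $a\le y_k\le b$ for all $k$, reordering this family of $n+2$ points gives a nondecreasing sequence whose smallest term is $a$ and whose largest term is $b$. Applying Theorem \ref{t:3} on $I=[a,b]$ with these roles for the extreme points yields
$$
\varphi\big(ma+m^2b-m^2S_\delta\big)\le m\varphi(a)+m^2\varphi(b)-m\,P_\delta,
$$
where $S_\delta=\delta a+\delta b+(1-2\delta)\sum_{k}w_k y_k$ and $P_\delta=\delta\varphi(ma)+\delta\varphi(mb)+(1-2\delta)\sum_{k}w_k\varphi(my_k)$; note that Theorem \ref{t:3} itself certifies that the argument on the left lies in $I$, precisely because $0\in I$. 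Letting $\delta\to0^+$ we have $S_\delta\to\sum_k w_k y_k$ and $P_\delta\to\sum_k w_k\varphi(my_k)$, so by the continuity of $\varphi$ the left-hand side tends to $\varphi\big(ma+m^2b-m^2\sum_k w_k y_k\big)$ while the right-hand side tends to $m\varphi(a)+m^2\varphi(b)-m\sum_k w_k\varphi(my_k)$, which is exactly \eqref{eq:7}.

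The delicate points here are essentially organizational: checking that the augmented weights are positive and normalized, that $a$ and $b$ are genuinely the extreme points of the enlarged family, and that the continuity hypothesis legitimizes the passage to the limit $\delta\to0^+$. This use of continuity is precisely the price paid for replacing the data extremes $x_1,x_n$ of Theorem \ref{t:3} by the fixed endpoints $a,b$, and I expect no real obstacle beyond it. As an alternative that sidesteps the limit altogether, one can argue directly: apply Lemma \ref{le:1} with $x_1=a$ and $x_n=b$ to each $y_k$ to get $\varphi(a+mb-my_k)\le\varphi(a)+m\varphi(b)-\varphi(my_k)$, and then invoke Theorem \ref{prop:2} for the points $z_k=a+mb-my_k$, the inclusion $z_k\in[a,b]$ being exactly where $a\le 0\le b$ enters.
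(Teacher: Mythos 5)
Your proof is correct and takes essentially the same route as the paper: the paper's own argument adjoins the endpoints by setting $y_0=a$, $y_{n+1}=b$ with weights $w_0'=w_{n+1}'=\e/2$ and $w_k'=(1-\e)w_k$ (your $\delta=\e/2$), applies Theorem \ref{t:3}, and lets $\e\to 0^+$ using the continuity of $\varphi$. Your closing alternative via Lemma \ref{le:1} and Theorem \ref{prop:2} is also sound and would in fact dispense with the continuity hypothesis altogether, with one small attribution fix: the inclusion $z_k=a+mb-my_k\in[a,b]$ already follows from $a\le y_k\le b$ and $m\in(0,1]$, whereas the hypothesis $a\le 0\le b$ is what legitimizes $m$-convexity on $[a,b]$ (e.g.\ $my_k\in[a,b]$) and the application of Theorem \ref{prop:2}.
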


\begin{proof}
If we consider $0<\e <1$, $y_0=a$, $y_{n+1}=b$, $w_k'=(1-\e)w_k$ $(1 \le k \le n)$, $w_0'=\e/2$ and $w_{n+1}'=\e/2$, then $\sum_{k=0}^{n+1} w_k'=1$
and Theorem \ref{t:3} gives
$$
\begin{aligned}
\varphi \Big( ma & +m^{2}b - \frac{m^{2}\e}2\,a - \frac{m^{2}\e}2\,b - m^{2}\sum_{k=1}^{n} (1-\e)w_k y_k \Big)
\\
& \le m\varphi(a) + m^{2}\varphi(b) - \frac{m\e}2\,\varphi(ma) - \frac{m\e}2\,\varphi(mb)
 - m \sum_{k=1}^{n} (1-\e)w_k \varphi (my_k) .
\end{aligned}
$$
Since $\varphi$ is a continuous function on $[a,b]$, if we take $\e \to 0^+$, we obtain
\eqref{eq:7}.
\end{proof}

\medskip

Next, we present a continuous version of the above discrete inequality.

\begin{theorem} \label{t:Jensen2m}
Let $\mu$ be a probability measure on the space $X$ and $a \le 0\le b$ real constants. If $f: X \rightarrow [a,b]$ is a measurable function and $\varphi$ is a continuous $m$-convex function on $[a,b]$, with $m\in (0,1]$, then
$f$ and $\varphi (mf)$ are $\mu$-integrable functions and
\begin{equation}
\label{eq:6}
\varphi \Big( ma+m^{2}b -m^{2} \int _{X} f\,d\mu \Big)
\le m\varphi(a) + m^{2}\varphi(b) -m \int _{X}\varphi (mf)\,d\mu .
\end{equation}
If $m=1$, this inequality also holds if we remove the hypothesis $0 \in [a,b]$.
\end{theorem}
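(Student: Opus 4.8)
The plan is to deduce the continuous inequality \eqref{eq:6} from its discrete counterpart, Theorem \ref{t:3m}, by approximating $f$ with simple functions and letting the approximation converge. Before that I would dispose of integrability and the well-definedness of every term. Since $f$ takes values in the compact interval $[a,b]$ it is bounded, hence $\mu$-integrable because $\mu$ is a probability measure. As $a \le 0 \le b$ and $m \in (0,1]$, the range of $mf$ is contained in $[ma,mb] \subseteq [a,b]$, and $\varphi$, being continuous on the compact set $[a,b]$, is bounded there; therefore $\varphi(mf)$ is bounded and measurable, hence $\mu$-integrable. Moreover $\int_X f\,d\mu \in [a,b]$, and the same elementary bounds used in the proof of Theorem \ref{t:3} (with $\int_X f\,d\mu$ playing the role of $\sum_k w_k x_k$) show that the argument $ma+m^2b-m^2\int_X f\,d\mu$ appearing on the left-hand side of \eqref{eq:6} also lies in $[a,b]$, so that $\varphi$ is evaluated inside its domain.

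Next I would pick a sequence of simple measurable functions $f_j:X\to[a,b]$ with $f_j\to f$ pointwise on $X$ (by the standard dyadic/truncation construction, keeping the values in $[a,b]$). Writing $f_j=\sum_{k=1}^{N_j} y_k^{(j)}\,\mathbf{1}_{A_k^{(j)}}$ with the $A_k^{(j)}$ pairwise disjoint, $\bigcup_k A_k^{(j)}=X$, and $w_k^{(j)}:=\mu(A_k^{(j)})$ (discarding indices with $w_k^{(j)}=0$ so that the remaining weights are positive and sum to $1$), I would apply Theorem \ref{t:3m} to the points $\{y_k^{(j)}\}\subset[a,b]$ with weights $\{w_k^{(j)}\}$. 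Since $\int_X f_j\,d\mu=\sum_k w_k^{(j)} y_k^{(j)}$ and $\int_X \varphi(mf_j)\,d\mu=\sum_k w_k^{(j)}\varphi(m y_k^{(j)})$, this gives exactly
\begin{equation*}
\varphi\Big( ma+m^2b-m^2\int_X f_j\,d\mu \Big) \le m\varphi(a)+m^2\varphi(b)-m\int_X \varphi(mf_j)\,d\mu
\end{equation*}
for every $j$, and each left-hand argument stays in $[a,b]$.

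Finally I would let $j\to\infty$. The integrands are uniformly bounded (by $\max\{|a|,|b|\}$ and by $\sup_{[a,b]}|\varphi|$, respectively), so dominated convergence yields $\int_X f_j\,d\mu\to\int_X f\,d\mu$ and, using that $\varphi$ is continuous so that $\varphi(mf_j)\to\varphi(mf)$ pointwise, also $\int_X \varphi(mf_j)\,d\mu\to\int_X \varphi(mf)\,d\mu$; continuity of $\varphi$ then passes the limit through on the left-hand side, and \eqref{eq:6} follows. I expect this passage to the limit to be the only delicate point: one must keep the approximating arguments inside $[a,b]$ (which Theorem \ref{t:3m} guarantees, $[a,b]$ being closed) and use continuity of $\varphi$ both inside its argument on the left and, via dominated convergence, on the right. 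For the case $m=1$ the hypothesis $0\in[a,b]$ can be dropped, because Theorem \ref{t:1} (equivalently Theorem \ref{t:3} together with Remark \ref{r:3}) supplies the required discrete inequality for convex functions on $[a,b]$ without assuming $0\in[a,b]$; the identical approximation argument then delivers \eqref{eq:6}.
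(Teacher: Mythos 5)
Your proposal is correct and follows essentially the same route as the paper's own proof: approximate $f$ by simple functions with values in $[a,b]$, apply the discrete inequality of Theorem \ref{t:3m} (Theorem \ref{t:1} for $m=1$ without $0\in[a,b]$), check that the arguments $ma+m^{2}b-m^{2}\int_X f_j\,d\mu$ stay in $[a,b]$, and pass to the limit using dominated convergence and the continuity of $\varphi$. Your explicit discarding of zero-weight atoms $\mu(A_k^{(j)})=0$ is in fact slightly more careful than the paper, whose weights $\mu(E_{n,k})$ are only nonnegative while Theorem \ref{t:3m} is stated for positive weights.
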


\begin{proof}
Since $a \leq f \leq b$ and $\varphi$ is a continuous function on $[a,b]$, we have that $f$ and $\varphi (mf)$ are bounded measurable functions on $X$.
And using that $\mu$ is a probability measure on $X$, we conclude that $f$ and $\varphi (mf)$ are $\mu$-integrable functions.

\medskip

For each $n \ge 1$ and $0 \le k \le 2^n$, let us consider the sets
$$
E_{n,k}
= \big\{ x \in X: \, a+k2^{-n}(b-a) \le f(x) < a+(k+1)2^{-n}(b-a) \big\}.
$$
Since $f$ is a measurable function satisfying $a \le f \le b$, we have that
$\{E_{n,k}\}_{k=0}^{2^n}$ are pairwise disjoint measurable sets and $X = \cup_{k=0}^{2^n} E_{n,k}$ for each $n$.
Thus,
$$
\sum_{k=0}^{2^n} \mu(E_{n,k})=1
$$
for each $n$.

\medskip

Since $f$ is a measurable function satisfying $a \le f \le b$ and $\{E_{n,k}\}_{k=0}^{2^n}$ is a partition of $X$, the sequence of simple functions
$$
f_n
= \sum_{k=0}^{2^n} \big( a+k2^{-n}(b-a) \big) \chi_{E_{n,k}}
$$
satisfies $a \le f_n \le b$ and $f-2^{-n}(b-a) < f_n \le f$ for every $n$ and so,
$$
\lim_{n \to \infty} f_n = f.
$$
Note that
$$
\int _{X} f_n\,d\mu
= \sum_{k=0}^{2^n} \big( a+k2^{-n}(b-a) \big) \mu(E_{n,k}) .
$$

\medskip

Since $\{E_{n,k}\}_{k=0}^{2^n}$ is a partition of $X$, we have
$$
\begin{aligned}
\varphi (mf)
& = \sum_{k=0}^{2^n} \varphi \big(ma+mk2^{-n}(b-a) \big) \chi_{E_{n,k}},
\\
\int_{X} \varphi (mf)\,d\mu
& = \sum_{k=0}^{2^n} \varphi \big( ma+mk2^{-n}(b-a) \big) \mu(E_{n,k}) .
\end{aligned}
$$
Hence, Theorem \ref{t:3m} gives
\begin{equation}
\label{eq:8}
\varphi \Big( ma+m^{2}b -m^{2} \int _{X} f_n\,d\mu \Big)
\le m\varphi(a) + m^{2}\varphi(b) - m\int_{X} \varphi (mf)\,d\mu.
\end{equation}

If $m=1$, Theorem \ref{t:1} gives the above inequality without the hypothesis $0 \in [a,b]$.

\smallskip

Since $a \le f_n \le b$ for every $n$, $\mu$ is a finite measure and $\displaystyle{\lim_{n \to \infty} f_n = f}$,
dominated convergence theorem gives
$$
\lim_{n \to \infty} \int _{X} f_n\,d\mu
= \int _{X} f \,d\mu.
$$

If $m \in (0,1)$, then the hypothesis $0 \in [a,b]$ guarantees that $ma+m^{2}b -m^{2} \int _{X} f_n\,d\mu \in [a,b]$:

\medskip

Since $a \le 0 \le b$, we have
$$
\begin{aligned}
ma+m^{2}b -m^{2} \int _{X} f_n\,d\mu
& \le ma+m^{2}b -m^{2} a
= m(1-m)a+m^2b
\le m^2b \le b,
\\
ma+m^{2}b -m^{2} \int _{X} f_n\,d\mu
& \ge ma+m^{2}b -m^{2} b
= ma \ge a.
\end{aligned}
$$

If $m=1$, then
$$
\begin{aligned}
a+b - \int _{X} f_n\,d\mu
& \le a+b - a
= b,
\\
a+b - \int _{X} f_n\,d\mu
& \ge a+b - b
= a,
\end{aligned}
$$
and so, we do not need the hypothesis $0 \in [a,b]$.

\medskip

Since
$$
a \le ma+m^{2}b -m^{2} \int _{X} f_n\,d\mu \le b
$$
for every $n$ and $m \in (0,1]$,
and $\varphi$ is a continuous function on $[a,b]$,
$$
\lim_{n \to \infty} \varphi \Big( ma+m^{2}b -m^{2} \int _{X} f_n\,d\mu \Big)
= \varphi \Big( ma+m^{2}b -m^{2} \int _{X} f\,d\mu \Big).
$$

Since $a \le mf_n \le b$ for every $n$, $\displaystyle{\lim_{n \to \infty} f_n = f}$
and $\varphi$ is a continuous function on $[a,b]$,
$\displaystyle{\lim_{n \to \infty} \varphi (mf_{n}) = \varphi (mf)}$.

\medskip

Again, from the continuity of  $\varphi$  on $[a,b]$, there exists a positive constant $K$ with $|\varphi| \le K$ on $[a,b]$ and so, $|\varphi (mf_n)| \le K$ for every $n$.

\medskip

In view of the finiteness of  $\mu$, dominated convergence theorem guarantees that
$$
\lim_{n \to \infty} \int _{X} \varphi (mf)\,d\mu
= \int _{X} \varphi (mf)\,d\mu \,.
$$

Combining the foregoing facts with \eqref{eq:8}, we obtain \eqref{eq:6}.
\end{proof}

\medskip

If $m=1$, it is possible to improve Theorem \ref{t:Jensen2m},
by removing the hypothesis of continuity.

\begin{theorem}
\label{t:Jensen3}
Let $\mu$ be a probability measure on the space $X$ and $a \le b$ real constants.
If $f: X \rightarrow [a,b]$ is a measurable function and $\varphi$ is a convex function on $[a,b]$, then
$f$ and $\varphi \circ f$ are $\mu$-integrable functions and
$$
\varphi \Big( a+b - \int _{X} f\,d\mu \Big)
\le \varphi(a) + \varphi(b) - \int _{X}\varphi \circ f\,d\mu .
$$
\end{theorem}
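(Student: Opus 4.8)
The plan is to mimic the proof of Theorem~\ref{t:Jensen2m} in the case $m=1$, replacing every appeal to the continuity of $\varphi$ by an argument using only convexity.

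First I would settle integrability. A convex function on $[a,b]$ is automatically bounded: writing any $x\in[a,b]$ as $x=\lambda a+(1-\lambda)b$ with $\lambda\in[0,1]$ gives $\varphi(x)\le\lambda\varphi(a)+(1-\lambda)\varphi(b)\le\max\{\varphi(a),\varphi(b)\}$, while the subgradient inequality at an interior point bounds $\varphi$ from below by an affine function on $[a,b]$. Hence there is $K>0$ with $|\varphi|\le K$ on $[a,b]$. Being convex, $\varphi$ is continuous on $(a,b)$ and thus Borel measurable, so $\varphi\circ f$ is measurable; since it is bounded and $\mu$ is finite, both $f$ and $\varphi\circ f$ are $\mu$-integrable. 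This part needs no continuity.

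Next I would reuse, verbatim, the dyadic construction from the proof of Theorem~\ref{t:Jensen2m}: the partition $\{E_{n,k}\}_{k=0}^{2^n}$ and the simple functions $f_n=\sum_{k=0}^{2^n}\big(a+k2^{-n}(b-a)\big)\chi_{E_{n,k}}$, which satisfy $a\le f_n\le f\le b$, $f_n\to f$ pointwise, and $\int_X f_n\,d\mu\to\int_X f\,d\mu$ by dominated convergence. The crucial observation is that on $\{f=a\}$ the bounds $a\le f_n\le f=a$ force $f_n=a$, while $\{f=b\}=E_{n,2^n}$ gives $f_n=b$; thus $f_n=f$ \emph{exactly} on the level sets $\{f=a\}$ and $\{f=b\}$ for every $n$. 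Applying the discrete Mercer inequality (Theorem~\ref{t:1}, which needs neither continuity nor $0\in[a,b]$) to the finitely many values of $f_n$ yields, for each $n$, inequality \eqref{eq:8} with $m=1$, namely
\begin{equation*}
\varphi\Big(a+b-\int_X f_n\,d\mu\Big)\le\varphi(a)+\varphi(b)-\int_X\varphi\circ f_n\,d\mu .
\end{equation*}

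The remaining task, and the main obstacle, is to pass to the limit here without continuity of $\varphi$ at $a$ and $b$. For the right-hand side I would check $\varphi\circ f_n\to\varphi\circ f$ pointwise: on $\{a<f<b\}$ this is immediate from continuity of $\varphi$ on $(a,b)$, and on $\{f=a\}\cup\{f=b\}$ it is trivial since $f_n=f$ there; then $|\varphi\circ f_n|\le K$ and dominated convergence give $\int_X\varphi\circ f_n\,d\mu\to\int_X\varphi\circ f\,d\mu$. For the left-hand side, set $c=\int_X f\,d\mu\in[a,b]$; if $a<c<b$ then $a+b-c\in(a,b)$ is interior and, since $a+b-\int_X f_n\,d\mu\to a+b-c$, continuity of $\varphi$ on $(a,b)$ yields the convergence. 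The only way $a+b-c$ can be an endpoint is $c=a$ or $c=b$; but $c=a$ together with $f\ge a$ forces $f=a$ $\mu$-a.e. (and $c=b$ forces $f=b$ $\mu$-a.e.), whence $f_n=f$ a.e., every term is exact, and the inequality reduces to an equality. Combining the two limits with the displayed inequality gives the statement. So the whole difficulty is localized at the two endpoints and is dissolved by the fact that the dyadic approximants coincide with $f$ precisely on $\{f=a\}$ and $\{f=b\}$, so the endpoint discontinuities of $\varphi$ are never evaluated in a limit. As an alternative route, one may replace $\varphi$ by its continuous convex regularization $\tilde\varphi$ obtained from the one-sided limits at the endpoints, apply Theorem~\ref{t:Jensen2m} to $\tilde\varphi$, and note that the resulting correction terms $(\varphi(a)-\tilde\varphi(a))\big(1-\mu(\{f=a\})\big)$ and $(\varphi(b)-\tilde\varphi(b))\big(1-\mu(\{f=b\})\big)$ are nonnegative and hence may be discarded.
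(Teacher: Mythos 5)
Your main argument is correct, and it takes a genuinely different route from the paper's. The paper proves Theorem \ref{t:Jensen3} exactly by what you relegate to your closing remark: it defines the continuous regularization $\varphi^*$ via the one-sided limits at $a$ and $b$, applies Theorem \ref{t:Jensen2m} with $m=1$ to $\varphi^*$, first disposes of the degenerate cases $f=a$ $\mu$-a.e.\ and $f=b$ $\mu$-a.e.\ (where the statement is an identity), so that $a+b-\int_X f\,d\mu$ is an interior point and $\varphi=\varphi^*$ there, and then writes $\varphi=\varphi^*+\D_a\,\chi_{\{a\}}+\D_b\,\chi_{\{b\}}$ with $\D_a,\D_b\ge 0$, discarding the terms $\D_a\big[1-\mu(\{f=a\})\big]$ and $\D_b\big[1-\mu(\{f=b\})\big]$. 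Your primary route instead reruns the dyadic construction from the proof of Theorem \ref{t:Jensen2m} and dissolves the endpoint problem by the observation that $f_n=f$ exactly on $\{f=a\}\cup\{f=b\}$, so the possible discontinuities of $\varphi$ at $a,b$ are never approached in a limit; the endpoint case for the left-hand side reduces, via $\int_X f\,d\mu\in\{a,b\}$ forcing $f$ a.e.\ constant, to an identity. Both proofs are complete; yours is self-contained at the level of the approximation argument (and isolates precisely where continuity was used before), while the paper's is shorter because it reuses Theorem \ref{t:Jensen2m} as a black box and pushes all endpoint pathology into the two correction constants.

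Two points deserve tightening. First, when you "apply Theorem \ref{t:1} to the finitely many values of $f_n$", the weights $\mu(E_{n,k})$ may vanish; if $\mu(E_{n,0})=0$ or $\mu(E_{n,2^n})=0$, the extreme points of the positively weighted values are no longer $a$ and $b$, and Theorem \ref{t:1} as stated yields the inequality with the wrong endpoints. The fix is the fixed-endpoint Mercer inequality, which indeed needs no continuity: for $y\in[a,b]$, writing $y=\lambda a+(1-\lambda)b$ gives $\varphi(y)+\varphi(a+b-y)\le\varphi(a)+\varphi(b)$, and discrete Jensen (valid for any convex function, no continuity required) then gives $\varphi\big(a+b-\sum_k w_k y_k\big)\le\varphi(a)+\varphi(b)-\sum_k w_k\varphi(y_k)$ for nonnegative weights summing to $1$; to be fair, the paper's own proof of Theorem \ref{t:Jensen2m} cites Theorem \ref{t:1} with the same looseness. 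Second, your alternative route as sketched is too quick: "the correction terms are nonnegative and may be discarded" ignores that the \emph{left-hand} side also changes when $a+b-\int_X f\,d\mu$ hits an endpoint (e.g.\ if $\int_X f\,d\mu=b$ the left side gains $\D_a$), which is exactly why the paper treats the degenerate cases separately before comparing $\varphi$ with $\varphi^*$. Since your main route handles those cases explicitly, this only affects the remark, not the proof.
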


\begin{proof}
Since $\varphi$ is a convex function on $[a,b]$, $\varphi$ is continuous on $(a,b)$ and there exist the limits
$$
\lim_{s \to a^+} \varphi(s) ,
\qquad
\lim_{s \to b^-} \varphi(s) .
$$
Define $\varphi^*$ as follows
$$
\varphi^*(t) =
\begin{cases}
\varphi(t) \quad & \text{if }\, t \in (a,b),
\\
\lim_{s \to a^+} \varphi(s) \quad & \text{if }\, t = a,
\\
\lim_{s \to b^-} \varphi(s) \quad & \text{if }\, t = b.
\end{cases}
$$
Hence, $\varphi^*$ is a continuous convex function on $[a,b]$, and by Theorem \ref{t:Jensen2m} with $m=1$, we have
$$
\varphi^* \Big( a+b - \int _{X} f\,d\mu \Big)
\le \varphi^*(a) + \varphi^*(b) - \int _{X}\varphi^* \circ f\,d\mu.
$$

Assume that $f=a$ $\mu$-a.e. or $f=b$ $\mu$-a.e.;
in the first case,
$$
\begin{aligned}
\varphi \Big( a+b - \int _{X} f\,d\mu \Big)
& = \varphi ( a+b - a)
= \varphi (b)
\\
& = \varphi(a) + \varphi(b) - \varphi(a)
= \varphi(a) + \varphi(b) - \int _{X}\varphi \circ f\,d\mu \,;
\end{aligned}
$$
in the second case,
$$
\begin{aligned}
\varphi \Big( a+b - \int _{X} f\,d\mu \Big)
& = \varphi ( a+b - b )
= \varphi (a)
\\
& = \varphi(a) + \varphi(b) - \varphi(b)
= \varphi(a) + \varphi(b) - \int _{X}\varphi \circ f\,d\mu \,.
\end{aligned}
$$
Otherwise, $\displaystyle{a < \int _{X} f\,d\mu < b}$ and $\displaystyle{a < a+b - \int _{X} f\,d\mu < b}$.
Consequently,
$$
\varphi \Big( a+b - \int _{X} f\,d\mu \Big)
= \varphi^* \Big( a+b - \int _{X} f\,d\mu \Big).
$$
If we define
$$
\D_a = \varphi (a) - \varphi^*  (a) \ge 0,
\qquad
\D_b = \varphi ( b ) - \varphi^*  ( b ) \ge 0,
$$
then
$$
\begin{aligned}
\varphi
& = \varphi^* + \D_a \, \chi_{\{a\}} + \D_b \, \chi_{\{b\}} ,
\\
\varphi \circ f
& = \varphi^* \circ f + \D_a \, \chi_{\{f=a\}} + \D_b \, \chi_{\{f=b\}} ,
\end{aligned}
$$
where $\chi_{A}$ is the function with value $1$ on the set $A$ and $0$ otherwise
(i.e., the characteristic function of $A$).
Hence,
$$
\int _{X}\varphi \circ f\,d\mu = \int _{X}\varphi^* \circ f\,d\mu + \D_a \, \mu\big(\{f=a\}\big) + \D_b \, \mu\big(\{f=b\}\big) ,
$$
and we have
$$
\begin{aligned}
& \varphi \Big( a+b - \int _{X} f\,d\mu \Big)
= \varphi^* \Big( a+b - \int _{X} f\,d\mu \Big)
\\
& \quad \le \varphi^*(a) + \varphi^*(b) - \int _{X}\varphi^* \circ f\,d\mu
\\
& \quad = \varphi(a) - \D_a + \varphi(b) - \D_b - \int _{X}\varphi \circ f\,d\mu  + \D_a \, \mu\big(\{f=a\}\big) + \D_b \, \mu\big(\{f=b\}\big)
\\
& \quad = \varphi(a) + \varphi(b) - \int _{X}\varphi \circ f\,d\mu  - \D_a \big[ 1 - \mu\big(\{f=a\}\big) \big] - \D_b \big[ 1 - \mu\big(\{f=b\}\big) \big]
\\
& \quad \le \varphi(a) + \varphi(b) - \int _{X}\varphi \circ f\,d\mu .
\end{aligned}
$$
\end{proof}

\medskip

Theorem \ref{t:Jensen3} has the following direct consequence.

\begin{corollary}
Let $a \le b$, let $\{y_{k}\}_{k=1}^{n}\subset [a,b]$ and let $\{w_k\}_{k=1}^{n}\!$ be positive weights whose sum is $1$.
If $\varphi$ is a convex function on $[a,b]$, then
$$
\varphi\Big(a+b-\sum_{k=1}^{n}w_{k} y_{k}\Big)\leq \varphi(a)+\varphi(b)-\sum_{k=1}^{n}w_{k}\varphi(y_{k}).
$$
\end{corollary}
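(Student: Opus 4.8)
The plan is to deduce this discrete inequality from the continuous Theorem \ref{t:Jensen3} by specializing the probability space to a finite set. This is the standard mechanism for passing from integral Jensen-type inequalities to their summation counterparts, and it is exactly the route signposted by the phrase ``direct consequence'' preceding the statement.

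Concretely, I would take $X=\{1,2,\dots,n\}$ equipped with the discrete $\sigma$-algebra (so that every function on $X$ is automatically measurable) and the probability measure $\mu$ defined by $\mu(\{k\})=w_k$ for $1\le k\le n$. Since the weights $w_k$ are positive and sum to $1$, this $\mu$ is indeed a probability measure on $X$. Next I would define the function $f:X\to[a,b]$ by $f(k)=y_k$; this is well defined into $[a,b]$ precisely because $\{y_k\}_{k=1}^{n}\subset[a,b]$ by hypothesis. With these choices, the defining integrals in Theorem \ref{t:Jensen3} collapse into finite sums,
$$
\int_X f\,d\mu=\sum_{k=1}^{n}w_k y_k,
\qquad
\int_X \varphi\circ f\,d\mu=\sum_{k=1}^{n}w_k \varphi(y_k).
$$

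With the dictionary above in place, I would simply invoke Theorem \ref{t:Jensen3} for the convex function $\varphi$ on $[a,b]$, the measure $\mu$, and the function $f$. Substituting the two displayed identities for the integrals directly yields
$$
\varphi\Big(a+b-\sum_{k=1}^{n}w_k y_k\Big)\le \varphi(a)+\varphi(b)-\sum_{k=1}^{n}w_k \varphi(y_k),
$$
which is the claimed inequality. Note that no assumption $0\in[a,b]$ is needed here, consistent with Theorem \ref{t:Jensen3}, which already dispenses with that hypothesis in the convex ($m=1$) case.

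I do not anticipate a genuine obstacle, since every hypothesis of Theorem \ref{t:Jensen3} is inherited for free: measurability is automatic on a finite space, the range condition $f(X)\subset[a,b]$ is built into the choice of $f$, and $\mu$ is a probability measure by the normalization of the weights. The only point worth a word of care is the bookkeeping that the integrals genuinely reduce to the stated sums, which follows at once from the additivity of the integral over the singletons $\{k\}$; everything else is a verbatim application of the theorem.
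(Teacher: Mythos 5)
Your proof is correct and is exactly the derivation the paper intends: the corollary is stated as a ``direct consequence'' of Theorem \ref{t:Jensen3}, obtained by taking $X=\{1,\dots,n\}$ with $\mu(\{k\})=w_k$ and $f(k)=y_k$, just as you do. All hypotheses transfer as you note, so nothing further is needed.
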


\medskip

Note that Theorem \ref{t:Jensen3} provides a kind of converse of the classical Jensen's inequality for convex functions.

\begin{proposition}
\label{p:Jensen}
Let $\mu$ be a probability measure on the space $X$ and $a \le b$ real constants.
If $f: X \rightarrow [a,b]$ is a measurable function and $\varphi$ is a convex function on $[a,b]$, then
$f$ and $\varphi \circ f$ are $\mu$-integrable functions and
$$
\varphi \Big( \int _{X} f\,d\mu \Big)
\le \int _{X}\varphi \circ f\,d\mu
\le \varphi(a) + \varphi(b) - \varphi \Big( a+b - \int _{X} f\,d\mu \Big) .
$$
\end{proposition}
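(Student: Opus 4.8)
The plan is to recognize this double inequality as the juxtaposition of two results already available: the left-hand bound is nothing but the classical Jensen inequality recalled at the start of Section~4, while the right-hand bound is a direct rearrangement of Theorem~\ref{t:Jensen3}. Thus essentially no new analytic work is required; the proof amounts to invoking these two statements and checking that their hypotheses coincide with those assumed here.

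First I would dispose of integrability. Since $a \le f \le b$ and $\mu$ is a probability (hence finite) measure, the bounded measurable function $f$ is $\mu$-integrable. A convex function on $[a,b]$ is bounded and Borel measurable (continuous on $(a,b)$ and finite at the endpoints), so $\varphi \circ f$ is again bounded and measurable, hence $\mu$-integrable. This is exactly the integrability already established in Theorem~\ref{t:Jensen3}, so I would simply refer to it.

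For the left inequality I would apply the classical Jensen inequality to the convex function $\varphi$ and the integrable function $f$. The only point needing a word of care is that the classical statement is phrased for $f$ taking values in the open interval $(a,b)$, whereas here $f$ ranges in $[a,b]$. If $f = a$ $\mu$-a.e.\ or $f = b$ $\mu$-a.e.\ the left inequality is a trivial equality; otherwise $\int_X f\,d\mu \in (a,b)$, and since $\varphi$ is convex on $[a,b]$ it is in particular convex on $(a,b)$, so Jensen applies verbatim and yields $\varphi\big(\int_X f\,d\mu\big) \le \int_X \varphi\circ f\,d\mu$.

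Finally, for the right inequality I would invoke Theorem~\ref{t:Jensen3}, whose hypotheses ($\mu$ a probability measure, $f:X\to[a,b]$ measurable, $\varphi$ convex on $[a,b]$) are precisely the present ones. That theorem gives
$$
\varphi\Big(a+b-\int_X f\,d\mu\Big) \le \varphi(a)+\varphi(b)-\int_X \varphi\circ f\,d\mu,
$$
and transposing the two integral-free terms produces
$$
\int_X \varphi\circ f\,d\mu \le \varphi(a)+\varphi(b)-\varphi\Big(a+b-\int_X f\,d\mu\Big),
$$
which is exactly the right-hand bound. Concatenating the two estimates completes the proof. I do not expect any genuine obstacle: the whole content lies in Theorem~\ref{t:Jensen3}, and the present proposition merely packages it together with classical Jensen to display the stated two-sided estimate, exhibiting $\int_X \varphi\circ f\,d\mu$ trapped between $\varphi\big(\int_X f\,d\mu\big)$ and its ``reflected'' counterpart.
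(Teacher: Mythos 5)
Your overall route is exactly the paper's: Proposition \ref{p:Jensen} appears there without a separate proof precisely because the right-hand inequality is Theorem \ref{t:Jensen3} with the two integral-free terms transposed, and the left-hand one is the classical Jensen inequality recalled at the start of Section 4. Your handling of integrability and of the right-hand inequality is correct and is the intended argument.

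The one step that does not hold as written is the claim that, in the remaining case, classical Jensen ``applies verbatim''. Ruling out $f=a$ $\mu$-a.e.\ and $f=b$ $\mu$-a.e.\ only places the point $\int_X f\,d\mu$ in the open interval $(a,b)$; it does not prevent $f$ from taking the values $a$ or $b$ on sets of positive (but not full) measure, while the classical statement you invoke requires $f$ to map into $(a,b)$. This is not a vacuous worry, since a convex function on $[a,b]$ may be discontinuous at the endpoints --- the very phenomenon the paper's proof of Theorem \ref{t:Jensen3} is built around. Fortunately the jumps can only go upward: convexity gives $\varphi(a)\ge\lim_{s\to a^+}\varphi(s)$ and $\varphi(b)\ge\lim_{s\to b^-}\varphi(s)$, so the inequality can only improve. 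The one-line repair uses the same device as the paper: letting $\varphi^*$ be the continuous convex modification of $\varphi$ (equal to the one-sided limits at $a$ and $b$), one has $\varphi^*\le\varphi$ on $[a,b]$ with equality on $(a,b)$, hence
$\int_X\varphi\circ f\,d\mu\ge\int_X\varphi^*\circ f\,d\mu\ge\varphi^*\big(\int_X f\,d\mu\big)=\varphi\big(\int_X f\,d\mu\big)$,
the middle step being Jensen for the continuous $\varphi^*$; equivalently, integrate a supporting line of $\varphi$ at the interior point $\int_X f\,d\mu$, which lies below $\varphi$ on all of $[a,b]$, endpoints included. With that patch your proof is complete and coincides with the paper's intended argument.
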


%

Theorems \ref{t:Jensen3} and \ref{t:Jensen2m}
have, respectively, the following direct consequences for general fractional integrals of Riemann-Liouville type.

\begin{proposition}
 \label{p:JensenRL}
Let $c<d$ and $a \le b$ be real constants.
If $f: [c,d] \rightarrow [a,b]$ is a measurable function, $\varphi$ is a convex function on $[a,b]$,
and
$$
	\mathbb{T}(\alpha)
	= \int _{ c }^{ d } \frac { 1 }{ T(d,s,\alpha ) } \, ds
	= \int _{ 0 }^{ g(d)-g(c) } \frac { dx}{ G(x,\alpha ) } < \infty ,
$$
then $f(s)/T\big(d,s,\alpha\big), \varphi(f(s))/T\big(d,s,\alpha\big) \in L^1[c,d]$ and
$$
\varphi \Big( a+b - \frac { 1 }{\mathbb{T}(\alpha )}\int_{c}^d \frac { f(s) }{ T\big(d,s,\alpha\big) } \, ds \Big)
\le \varphi(a) + \varphi(b) - \frac { 1 }{\mathbb{T}(\alpha )}\int_{c}^d \frac { \varphi(f(s)) }{ T\big(d,s,\alpha\big) } \, ds .
$$
\end{proposition}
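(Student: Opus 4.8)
The plan is to obtain this as a direct specialization of Theorem \ref{t:Jensen3}, choosing the probability space $X=[c,d]$ equipped with the probability measure whose density is precisely the fractional-integral weight. The whole argument is a translation: once the measure is identified, the inequality of Theorem \ref{t:Jensen3} becomes the claimed one after rewriting the abstract integrals $\int_X \cdot\,d\mu$ as weighted Lebesgue integrals.

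First I would verify that the weight is admissible. Reading Definition \ref{d:01} on the interval $[c,d]$, the function $g$ has a continuous positive derivative on $(c,d)$, hence $g$ is strictly increasing and $g(d)-g(s)>0$ with $g(d)-g(s)\le g(d)-g(c)$ for $s\in[c,d)$; moreover $G$ is positive on $(0,g(d)-g(c)]\times(0,\infty)$. Consequently
$$
T(d,s,\alpha)=\frac{G\big(g(d)-g(s),\alpha\big)}{g'(s)}>0,
$$
and $1/T(d,\cdot,\alpha)$ is a nonnegative measurable function on $[c,d]$. Since by hypothesis $\mathbb{T}(\alpha)=\int_c^d 1/T(d,s,\alpha)\,ds<\infty$ and $\mathbb{T}(\alpha)>0$, the set function
$$
\mu(E)=\frac{1}{\mathbb{T}(\alpha)}\int_E \frac{ds}{T(d,s,\alpha)}
$$
defines a probability measure on $[c,d]$, because $\mu([c,d])=1$ by the very definition of $\mathbb{T}(\alpha)$.

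Next I would apply Theorem \ref{t:Jensen3} to this $\mu$, to the measurable function $f:[c,d]\to[a,b]$, and to the convex function $\varphi$ on $[a,b]$. The theorem yields that $f$ and $\varphi\circ f$ are $\mu$-integrable and that
$$
\varphi\Big(a+b-\int_X f\,d\mu\Big)\le \varphi(a)+\varphi(b)-\int_X \varphi\circ f\,d\mu.
$$
For any $\mu$-integrable $h$ one has $\int_X h\,d\mu=\frac{1}{\mathbb{T}(\alpha)}\int_c^d \frac{h(s)}{T(d,s,\alpha)}\,ds$; in particular, the $\mu$-integrability of $f$ and $\varphi\circ f$ is exactly the assertion $f(s)/T(d,s,\alpha),\,\varphi(f(s))/T(d,s,\alpha)\in L^1[c,d]$, since the positive constant factor $1/\mathbb{T}(\alpha)$ does not affect integrability. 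Substituting these two identities into the displayed inequality gives the statement verbatim.

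I do not expect a genuine obstacle here, as the proposition is essentially a reformulation of Theorem \ref{t:Jensen3}; the only points demanding care are confirming that $1/T(d,\cdot,\alpha)$ is strictly positive and Lebesgue-integrable, so that $\mu$ is a bona fide probability measure (this rests on the positivity hypotheses on $g'$ and $G$ and the monotonicity of $g$ from Definition \ref{d:01}, applied on $[c,d]$), and matching the integrability conclusion of Theorem \ref{t:Jensen3} with the stated membership in $L^1[c,d]$. One should also keep in mind the notational point that the constants $a\le b$ here denote the range of $f$, distinct from the endpoints $c<d$ of the underlying interval on which $g$, $G$, and $T$ are defined.
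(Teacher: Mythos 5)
Your proposal is correct and follows exactly the paper's route: the paper presents this proposition as a direct consequence of Theorem \ref{t:Jensen3}, obtained by taking $X=[c,d]$ with the probability measure $d\mu = \frac{1}{\mathbb{T}(\alpha)}\,\frac{ds}{T(d,s,\alpha)}$, precisely as you do. Your explicit checks that $1/T(d,\cdot,\alpha)$ is positive and measurable, that $\mu([c,d])=1$, and that $\mu$-integrability of $f$ and $\varphi\circ f$ translates into the stated $L^1[c,d]$ memberships are exactly the routine details the paper leaves implicit.
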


\begin{proposition}
 \label{p:JensenRLm}
Let $c<d$ and $a \le 0 \le b$ be real constants.
If $f: [c,d] \rightarrow [a,b]$ is a measurable function, $\varphi$ is a continuous $m$-convex function on $[a,b]$, with $m \in (0,1]$,
and
$$
	\mathbb{T}(\alpha)
	= \int _{ c }^{ d } \frac { 1 }{ T(d,s,\alpha ) } \, ds
	= \int _{ 0 }^{ g(d)-g(c) } \frac { dx}{ G(x,\alpha ) } < \infty ,
$$
then $f(s)/T\big(d,s,\alpha\big), \varphi(mf(s))/T\big(d,s,\alpha\big) \in L^1[c,d]$ and
$$
\varphi \Big( ma+m^2b - \frac { m^2 }{\mathbb{T}(\alpha )}\!\!\int_{c}^d \!\!\frac { f(s) }{ T\big(d,s,\alpha\big) } \, ds \Big)
\le m\varphi(a) + m^2\varphi(b) - \frac { m }{\mathbb{T}(\alpha )}\!\!\int_{c}^d \!\frac { \varphi(mf(s)) }{ T\big(d,s,\alpha\big) } \, ds .
$$
\end{proposition}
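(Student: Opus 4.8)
The plan is to deduce this proposition directly from Theorem \ref{t:Jensen2m} by taking $X=[c,d]$ and equipping it with the probability measure whose density against Lebesgue measure is the normalized kernel of the generalized Riemann-Liouville operator evaluated at $t=d$. Concretely, I would set
$$
d\mu(s) = \frac{1}{\mathbb{T}(\alpha)}\,\frac{ds}{T(d,s,\alpha)}, \qquad s \in [c,d],
$$
and then simply read off the conclusion of Theorem \ref{t:Jensen2m} for this $\mu$.

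First I would check that $\mu$ is a genuine probability measure. By Definition \ref{d:01}, $g$ has continuous positive derivative and $G$ is positive on $(0,g(b)-g(a)]\times(0,\infty)$, so for $s\in(c,d)$ we have $|g(d)-g(s)|=g(d)-g(s)>0$ and hence $T(d,s,\alpha)=G\big(g(d)-g(s),\alpha\big)/g'(s)>0$. Thus the density $1/\big(\mathbb{T}(\alpha)\,T(d,s,\alpha)\big)$ is nonnegative and measurable, and the hypothesis $\mathbb{T}(\alpha)<\infty$, together with $\mathbb{T}(\alpha)>0$ (which holds since $c<d$ and the integrand is strictly positive), guarantees $\mu([c,d])=\mathbb{T}(\alpha)^{-1}\int_c^d T(d,s,\alpha)^{-1}\,ds=1$. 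The equality of the two expressions for $\mathbb{T}(\alpha)$ recorded in the statement is just the change of variables $x=g(d)-g(s)$, $dx=-g'(s)\,ds$, which sends $[c,d]$ to $[0,g(d)-g(c)]$ and converts $g'(s)/G\big(g(d)-g(s),\alpha\big)\,ds$ into $dx/G(x,\alpha)$.

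With this measure in hand, Theorem \ref{t:Jensen2m} applies verbatim: since $a\le f\le b$, $a\le 0\le b$, and $\varphi$ is continuous and $m$-convex on $[a,b]$ with $m\in(0,1]$, it yields that $f$ and $\varphi(mf)$ are $\mu$-integrable and
$$
\varphi\Big(ma+m^2b-m^2\!\int_{[c,d]} f\,d\mu\Big) \le m\varphi(a)+m^2\varphi(b)-m\!\int_{[c,d]}\varphi(mf)\,d\mu.
$$
Unwinding the definition of $\mu$, the $\mu$-integrability of $f$ and $\varphi(mf)$ is precisely the assertion that $f(s)/T(d,s,\alpha)$ and $\varphi(mf(s))/T(d,s,\alpha)$ belong to $L^1[c,d]$, while $\int f\,d\mu=\mathbb{T}(\alpha)^{-1}\int_c^d f(s)/T(d,s,\alpha)\,ds$ and $\int \varphi(mf)\,d\mu=\mathbb{T}(\alpha)^{-1}\int_c^d \varphi(mf(s))/T(d,s,\alpha)\,ds$. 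Substituting these identities into the displayed inequality produces the claimed estimate.

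The argument is essentially a translation of Theorem \ref{t:Jensen2m}, so I do not expect a deep obstacle; the only point requiring care is the verification that $\mu$ is a bona fide probability measure despite the possible singularity of $1/T(d,s,\alpha)$ as $s\to d^-$ (and the fact that $G$ may vanish at $0$). That singularity is harmless precisely because the hypothesis $\mathbb{T}(\alpha)<\infty$ absorbs it into a finite total mass, and the endpoint $s=d$ carries $\mu$-measure zero, so the normalization is well defined.
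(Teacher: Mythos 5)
Your proof is correct and is exactly the argument the paper intends: it presents Proposition \ref{p:JensenRLm} as a ``direct consequence'' of Theorem \ref{t:Jensen2m}, obtained by applying that theorem on $X=[c,d]$ with the normalized measure $d\mu(s)=\mathbb{T}(\alpha)^{-1}\,T(d,s,\alpha)^{-1}\,ds$, which is precisely your construction. Your additional verifications (positivity of the density via $g'>0$ and $g$ increasing, the change of variables $x=g(d)-g(s)$ giving the normalization, and the harmlessness of the possible singularity at $s=d$ under the hypothesis $\mathbb{T}(\alpha)<\infty$) are the details the paper leaves implicit.
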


\bigskip

\section*{Acknowledgments}
The research of Yamilet Quintana, Jos\'e M. Rodr\'{\i}guez and Jos\'e M. Sigarreta is supported by a grant from Agencia Estatal de Investigaci\'on (PID2019-106433GB-I00 / AEI / 10.13039/501100011033), Spain.
The research of Jos\'e M. Rodr\'{\i}guez and Yamilet Quintana is supported by the Madrid Government (Comunidad de Madrid-Spain) under the Multiannual Agreement with UC3M in the line of Excellence of University Professors (EPUC3M23), and in the context of the V PRICIT (Regional Programme of Research and Technological Innovation).

\end{document}